\documentclass[11pt]{amsart}
\usepackage{bbm}
\usepackage{mathrsfs}
\usepackage{cases}
\usepackage{latexsym}
\usepackage[all]{xy}
\usepackage{stmaryrd}
\usepackage{amsfonts}
\usepackage{float}
\usepackage{amssymb,amscd,amsthm,dsfont}
\usepackage{amsmath}
\usepackage{fancyhdr}
\usepackage{amsxtra,ifthen}
\usepackage{verbatim}
\usepackage{tikz-cd}
\tikzstyle{dot}=[draw, fill =black, circle, inner sep=0pt, minimum size=2pt]

\theoremstyle{plain}

\numberwithin{equation}{section}
\newtheorem{theorem}{Theorem}[section]
\newtheorem{lemma}[theorem]{Lemma}

\newtheorem{corollary}[theorem]{Corollary}

\newtheorem{definition}[theorem]{Definition}
\newtheorem{example}[theorem]{Example}

\begin{document}

\title[Semisimplicity of affine cellular algebras]
{Semisimplicity of affine cellular algebras}

\thanks {Corresponding Author: Bowen Sun}
\thanks {The work is supported by the Natural Science Foundation of Hebei Province, China (A2021501002); China Scholarship Council (202008130184)  and
Natural Science Foundation of China (11871107).}

\author{Yanbo Li}

\address{Li: School of Mathematics and Statistics, Northeastern
University at Qinhuangdao, Qinhuangdao, 066004, P.R. China}

\email{liyanbo707@163.com}

\author{Bowen Sun}

\address{Sun: Department of mathematics, School of Science, Northeastern
University, Shenyang, 110819, P.R. China}

\email{bowen$_{-}$un@163.com}

\begin{abstract}
In this note, we prove that an affine cellular algebra $A$ is semisimple if and only if
the scheme associated to $A$ is reduced and 0-dimensional, and the bilinear forms with respect to all layers of $A$ are isomorphisms.
Moreover, if the ground ring is a perfect field, then $A$ is semisimple if and only if it is separable.
We also give a sufficient condition for an affine cellular algebra being Jacobson semisimple.
\end{abstract}

\subjclass[2010]{13B25; 16G30; 16K40; 16N60}

\keywords{Jacobson semisimple;  semisimple; separable; affine cellular algebra}

\maketitle

\section{Introduction}

Affine cellular algebras were introduced by Koenig and Xi in \cite{KX3},
who extended the framework of cellular algebras due to Graham and Lehrer \cite{GL}.
The isomorphism classes of simple representations of
affine cellular algebras are parameterized by the complement of finitely many subvarieties in a
finite disjoint union of affine varieties. Many classes of algebras have been
found to be affine cellular, including affine Hecke algebras of type A \cite{KX3}, affine Temperley-Lieb algebras \cite{KX3},
Khovanov-Lauda-Rouquier algebras of finite types \cite{KL1, KL2},  affine quasi-hereditary algebras \cite{K}, affine Brauer algebras \cite{C1},
affine Birman-Murakami-Wenzl algebras \cite{C2},
affine Yokonuma-Hecke algebras \cite{C5}, affine q-Schur algebras \cite{C4}, BLN algebras \cite{C3}, affine Hecke algebras of rank two \cite{GM} and so on.

\smallskip

In \cite{CKL}, Carvalho, Koenig and Lomp studied ring theoretical structure of affine cellular algebras.
The aim of this note is to do some work along this direction. More exactly, we will study the semisimplicity of affine cellular algebras.
Let us first clarify some relevant notions.

Recall that a ring $R$ is called semisimple if it is semisimple as a left $R$-module. Every semisimple ring is isomorphic
to a finite direct product of matrix rings and is left and right artinian.
Denote by $J(R)$ the Jacobson radical of $R$. Then ring $R$ is called Jacobson semisimple if $J(R)=0$. Note that the action of $J(R)$ on a simple $R$-module is zero.
A ring $R$ is called semiprime if $R$ has no nonzero nilpotent ideals.
We have the following hierarchy of the algebras aforementioned:
$$\{\rm semisimple\}\subset \{\rm Jacobson\, semisimple\}\subset \{\rm semiprime\}$$
It is helpful to point out that if $R$ is a left artinian ring,
then $R$ being semisimple is equivalent to being Jacobson semisimple, and is equivalent to being semiprime, too.
Note that a commutative semiprime ring is also called reduced.

\smallskip

The main goal of this note is to prove the following theorem, which gives a sufficient and necessary condition for an affine cellular algebra $A$ to be semisimple.
Denote by $Spec[A]$ the associated scheme to $A$ (see Definition \ref{4.4}) and denote by $\phi_j$ the bilinear form of the $j$-th layer of $A$.

\smallskip

\noindent {\bf Theorem}\, {\em Let $K$ be a field and let $A$	be an affine cellular $K$-algebra. Then $A$ is semisimple if and only if
\begin{enumerate}
\item[(1)] $Spec[A]$ is a reduced $0$-dimensional scheme;
\item[(2)] $\phi _j$ are invertible for all $j$.
\end{enumerate}}

\smallskip

Moreover, if the ground ring is a perfect field, then $A$ is semisimple if and only if it is separable.
We also give a sufficient condition for an affine cellular algebra being Jacobson semisimple.

\bigskip

\section{Affine cellular algebras}

In this section, we give a quick review on the definitions and some known results
about an affine cellular algebra which are needed in the next section. The main reference is \cite{KX3}.

\smallskip

Let $K$ be a principal ideal domain. Given two
$K$-modules $V$ and $W$, we denote by $\tau$ the swich map: $V\otimes_KW\rightarrow W\otimes_KV$, $v\otimes_Kw\rightarrow w\otimes_Kv$ for $v\in V$ and $w\in W$. A $K$-algebra $B$ is called affine if $B=K[x_1, \cdots, x_t]/I$, where $K[x_1, \cdots, x_t]$ is a polynomial ring in finitely many variables $x_1, \cdots, x_t$ and $I$ an ideal. A $K$-involution $\ast$ on a $K$-algebra $A$ is a $K$-linear anti-automorphism with $(a^{\ast})^{\ast}=a$ for all $a\in A$.

\begin{definition}\cite[Definition 2.1]{KX3}\label{2.1}	
Let $A$ be a unitary $K$-algebra with a $K$-involution $\ast$. A two-sided ideal $J$ in $A$ is called an affine cell ideal if and only if the following data are given and the following conditions are satisfied:\vskip2mm
\begin{enumerate}
\item[(1)] The ideal $J$ is fixed by $\ast$: $(J)^{\ast}=J.$
\smallskip
\item[(2)]There exist a free $K$-module $V$ of finite rank,  an affine $K$-algebra $B$ with identity and with a $K$-involution $\sigma$ such that $\Delta :=V\otimes_{K} B$ is an $A$-$B$-bimodule, on which the right $B$-module structure is induced by $B_{B}$.
\smallskip
\item[(3)] There is an $A$-$A$-bimodule isomorphism $\alpha :J\rightarrow \Delta\otimes_{B}\Delta',$ where $\Delta'=B\otimes_{K}V$ is a $B$-$A$-bimodule with the left $B$-module induced by ${}_{B}B$ and with the right $A$-module structure defined by $(b\otimes v)a :=\tau(a^{\ast}(v\otimes b))$ for $a\in A$, $b\in B$ and $v\in V$, such that the following diagram is commutative:
\[\begin{CD}
J   @>\alpha>>\Delta\otimes_{B}\Delta'\\
@V \ast VV                  @VVv_1\otimes b_1\otimes_{B}b_2\otimes v_2\mapsto v_2\otimes \sigma(b_2)\otimes_{B}\sigma(b_1)\otimes v_1V\\
J         @>\alpha>>   \Delta\otimes_{B}\Delta'
\end{CD}\]
\end{enumerate}
\smallskip
The algebra $A$ with $K$-involution $\ast$ is called affine cellular if and only if there is a $K$-module decomposition $A=J_{1}'\oplus J_{2}'\oplus\cdots J_{m}'$ (for some $m$) with $(J_{j}')^{\ast}=J_{j}'$ for each $j$ $(j=1,\dots,m)$ and such that setting $J_{j}: =\bigoplus_{l=1}^{j}J_{l}'$ gives a chain of two-sided ideals of $A$: $0=J_{0}\subset J_{1}\subset J_{2}\subset\cdots\subset J_{m}=A$ (each of them fixed by $\ast$), and each $J_{j}'=J_{j}/J_{j-1}$  is an affine cell ideal of $A/J_{j-1}$ (with respect to the involution induced by $\ast$ on the quotient).\vskip3mm	
\end{definition}

Clearly, if all the affine algebras $B_j$ are equal to the ground ring $K$, we recover the definition of a cellular algebra given by Koenig and Xi in \cite{KX2}.
Note that the original definition of a cellular algebra was given by Graham and Lehrer in \cite{GL}.

\smallskip

For an affine cell ideal $J$ in an algebra $A$, the following lemma gives the basic structure of $J$ when viewed as an algebra (without unit) in itself.

\begin{lemma}\cite[Proposition 2.2]{KX3}\label{2.2}
Let $J$ be an affine cell ideal in a $K$-algebra $A$ with an involution $\ast$. We identify $J$ with
$\Delta \otimes_B \Delta' = V \otimes_K B \otimes_K V$. Then:
\begin{enumerate}
\item[(1)]\, There is a $K$-linear map $\phi : V \otimes_K V \rightarrow B$ such that
$$(u \otimes b \otimes v)(u' \otimes b' \otimes v') = u \otimes b\phi(v, u')b' \otimes v'$$
for all $u, u', v, v' \in V$ and $b, b' \in B$.
\item[(2)]\, If $I$ is an ideal in $B$ and $u, v \in V$, then $V \otimes_K I \otimes_K V$ is an ideal in $A$.
\end{enumerate}
\end{lemma}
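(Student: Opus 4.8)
The plan is to read the whole multiplication on $J$ off from a single $B$-$B$-bimodule ``contraction'', and then to obtain part (2) directly from the bimodule structure. Throughout I use the isomorphism $\alpha\colon J\xrightarrow{\sim}\Delta\otimes_B\Delta'$ of Definition \ref{2.1}(3), which is one of $A$-$A$-bimodules: the left $A$-action on $\Delta\otimes_B\Delta'$ is that on the left factor $\Delta=V\otimes_K B$, and the right $A$-action is that on the right factor $\Delta'=B\otimes_K V$. Since $J$ is a two-sided ideal of the associative algebra $A$, restricting the multiplication of $A$ yields a $K$-bilinear map $m\colon J\times J\to J$ that is left $A$-linear in its first variable, right $A$-linear in its second variable, and satisfies $(ja)j'=j(aj')$ for $a\in A$ and $j,j'\in J$. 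Writing $j=x\otimes_B y$ and $j'=x'\otimes_B y'$ with $x,x'\in\Delta$ and $y,y'\in\Delta'$, the identities $(x\otimes_B y)\cdot a=x\otimes_B(y\cdot a)$ and $a\cdot(x'\otimes_B y')=(a\cdot x')\otimes_B y'$ combined with associativity give
$$\bigl(x\otimes_B(y\cdot a)\bigr)\bigl(x'\otimes_B y'\bigr)=\bigl(x\otimes_B y\bigr)\bigl((a\cdot x')\otimes_B y'\bigr).$$
Thus $m$ is $A$-balanced in its two inner factors and descends to a $B$-$B$-bimodule homomorphism $\overline{m}\colon\Delta\otimes_B(\Delta'\otimes_A\Delta)\otimes_B\Delta'\to\Delta\otimes_B\Delta'$.

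The heart of the matter is to show that $\overline{m}$ is induced by a single $B$-$B$-bimodule homomorphism $\mu\colon\Delta'\otimes_A\Delta\to B$, i.e. that under the canonical identification $\Delta\otimes_B B\otimes_B\Delta'=\Delta\otimes_B\Delta'$ one has $\overline{m}=\mathrm{id}_\Delta\otimes_B\mu\otimes_B\mathrm{id}_{\Delta'}$. Here I would exploit that $\Delta=V\otimes_K B$ is free as a right $B$-module on (a $K$-basis of) $V$, that $\Delta'=B\otimes_K V$ is free as a left $B$-module on $V$, and that the left $A$-action on $\Delta$ commutes with the right $B$-action while the right $A$-action on $\Delta'$ commutes with the left $B$-action. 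Concretely, the $J$-action on $\Delta$ gives a right $B$-linear map $\Delta\otimes_B(\Delta'\otimes_A\Delta)\to\Delta$, and right $B$-linearity together with the freeness of $\Delta$ forces it to have the form $x\otimes_B w\mapsto x\cdot\mu(w)$ for a $B$-$B$-bimodule map $\mu$; pairing in the outer factor $\Delta'$ then recovers $\overline{m}$. I expect this bookkeeping — verifying that the outer copies of $V$ pass through unchanged, so that the entire product is concentrated in the middle $B$-slot — to be the main obstacle.

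Granting this, I would finish part (1) by setting
$$\phi(v,u'):=\mu\bigl((1\otimes v)\otimes_A(u'\otimes 1)\bigr)\in B,$$
a $K$-bilinear map in $(v,u')\in V\times V$. Unwinding $u\otimes b\otimes v=(u\otimes b)\otimes_B(1\otimes v)$ and $u'\otimes b'\otimes v'=(u'\otimes 1)\otimes_B(b'\otimes v')$ and using that $\mu$ is a $B$-$B$-bimodule map yields
$$(u\otimes b\otimes v)(u'\otimes b'\otimes v')=(u\otimes b)\otimes_B\bigl(\mu((1\otimes v)\otimes_A(u'\otimes 1))\,b'\otimes v'\bigr)=u\otimes b\,\phi(v,u')\,b'\otimes v',$$
which is the asserted formula.

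For part (2) I would argue straight from the bimodule structure, so that it does not depend on the delicate step above. Let $I\subseteq B$ be an ideal and $c\in I$. For the left action, for $a\in A$ I write $a\cdot(u\otimes c\otimes v)=\bigl(a\cdot(u\otimes c)\bigr)\otimes_B(1\otimes v)$ and, since $\Delta$ is an $A$-$B$-bimodule, $a\cdot(u\otimes c)=\bigl(a\cdot(u\otimes 1)\bigr)\cdot c$; writing $a\cdot(u\otimes 1)=\sum_k w_k\otimes d_k\in V\otimes_K B$ gives $a\cdot(u\otimes c)=\sum_k w_k\otimes d_k c\in V\otimes_K I$ because $I$ is an ideal, whence $a\cdot(u\otimes c\otimes v)\in V\otimes_K I\otimes_K V$. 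The right action is symmetric, using that $\Delta'=B\otimes_K V$ is a $B$-$A$-bimodule and that $c\,d_k'\in I$. Hence $V\otimes_K I\otimes_K V$ is stable under left and right multiplication by $A$, i.e. it is a two-sided ideal of $A$; closure under multiplication by $J$ itself also follows at once from the formula in part (1), since the middle entry $b\,\phi(v,u')\,c$ lies in $I$.
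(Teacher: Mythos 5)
Your part (2) is correct and is essentially the standard argument, and it is indeed independent of the delicate step in part (1). The problem is that part (1) has a genuine gap at exactly the point you yourself flag as ``the main obstacle'': you never close it, and the justification you sketch would fail. A right $B$-linear map $\Delta\otimes_B(\Delta'\otimes_A\Delta)\to\Delta$, with $\Delta=V\otimes_KB$ free of rank $n$ as a right $B$-module, is given in a basis $v_1,\dots,v_n$ of $V$ by an \emph{arbitrary} $n\times n$ matrix of maps $\mu_{ij}\colon\Delta'\otimes_A\Delta\to B$, via $v_i\otimes b\otimes_B w\mapsto\sum_j v_j\otimes\mu_{ji}(w)b$; freeness does not force this matrix to be scalar, i.e.\ of the form $x\otimes_B w\mapsto x\cdot\mu(w)$. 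Adding left $A$-linearity does not rescue the claim: by adjunction such maps correspond to $B$-$B$-bimodule maps $\Delta'\otimes_A\Delta\to\operatorname{End}_A(\Delta)$, and nothing in the hypotheses identifies $\operatorname{End}_A(\Delta)$ with $B$ (if, say, $A$ acts on $\Delta$ through $K$, every matrix as above is $A$-linear). The real reason your reduction cannot work is structural: by passing to the one-sided action of $J$ on $\Delta$ you discarded the right $A$-module structure on $\Delta'$ (the one defined through the involution), and that structure is precisely what pins down the outer $V$-slots.

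The proof in Koenig--Xi (the paper itself gives no proof, quoting \cite[Proposition 2.2]{KX3}) closes the gap by using \emph{both} one-sided structures on the product simultaneously. Compute $p=(u\otimes b\otimes v)(u'\otimes b'\otimes v')$ in two ways: viewed as left multiplication by $u\otimes b\otimes v\in A$ on $(u'\otimes b')\otimes_B(1\otimes v')\in\Delta\otimes_B\Delta'$, the left $A$-action passes only through the factor $\Delta$, so $p\in(V\otimes_KB)\otimes_K v'$; viewed as right multiplication by $u'\otimes b'\otimes v'$ on $(u\otimes b)\otimes_B(1\otimes v)$, the right $A$-action passes only through $\Delta'$, so $p\in u\otimes_K(B\otimes_KV)$. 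Expanding in a $K$-basis of $V$ (it suffices, by bilinearity, to take $u$ and $v'$ basis vectors) and comparing the two expressions shows $p\in u\otimes_KB\otimes_K v'$, i.e.\ the product is concentrated in the middle $B$-slot; one then defines $\phi(v,u')$ as the middle entry for $b=b'=1$ and recovers the general formula from left and right $B$-linearity of that slot, exactly as in the last display of your part (1). So your overall architecture (a contraction $\mu\colon\Delta'\otimes_A\Delta\to B$, then $\overline{m}=\mathrm{id}_\Delta\otimes_B\mu\otimes_B\mathrm{id}_{\Delta'}$) is a reasonable repackaging of the conclusion, but the existence of $\mu$ must be proved by this two-sided slot-pinning argument, not by freeness of $\Delta$ alone.
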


Because of the importance of bilinear form $\phi$, we often write $\Delta\otimes_B \Delta'$ as $\mathcal{A}(V, B, \phi)$.
Let $\{v_1, \cdots, v_n\}$ be a basis of $V$ and identify the bilinear form $\phi$ with the matrix $\phi=(\phi_{ij})=\phi(v_i, v_j)$
(we often use the same notation for the bilinear form and its matrix in this note).
Then $J$ is isomorphic to a swich algebra $S(M_n(B),\, (\phi_{ij}))$ with the definition given as follows.

\begin{definition} \cite[Definition 3.3]{KX3}\label{2.3}
Let $\Lambda$ be a $K$-algebra and fix an element $a_0\in \Lambda$. We define a new $K$-algebra $\widetilde{\Lambda}= S(\Lambda,\, a_0)$,
called the swich algebra of $\Lambda$ with respect to $a_0$, where as a set $\widetilde{\Lambda}=\{\widetilde{a}\mid a\in \Lambda\}$,
and the algebra structure on $\widetilde{\Lambda}$ is given by
$$\widetilde{a} + \widetilde{b} = \widetilde{a + b}, \quad\quad a, b \in \Lambda,$$
$$\widetilde{a}\cdot\widetilde{b} = \widetilde{a a_0 b},\,\, \quad\quad a, b \in \Lambda,$$
$$\lambda\widetilde{a}=\widetilde{\lambda a}, \quad\quad \lambda\in K,\, a\in \Lambda.$$
\end{definition}

A swich algebra of a matrix algebra $M_n(B)$ is in fact a generalized matrix algebra in the sense of \cite{B}.
By a straightforward computation, one can prove that the map $\varphi: \widetilde{\Lambda}\rightarrow \Lambda$
defined by $$\widetilde{a}\mapsto \varphi(\widetilde{a})=aa_0$$ is an algebra homomorphism, and
a $\Lambda$-module $M$ will become a $\widetilde \Lambda$-module $M^{\varphi}$ via $\varphi$.
The following lemma establishes a relationship between the set of all simple modules over $\Lambda$ and that over a swich algebra $\widetilde{\Lambda}$.

\begin{lemma}\cite[Theorem 3.10]{KX3}\label{2.4}
Let $\Lambda$ be a $K$-algebra with identity such that $\Lambda$ is finitely generated over its centre.
Then there is a bijection $\omega$ between the set of non-isomorphic simple $\Lambda$-modules $E$ with $\widetilde{\Lambda}.E \neq 0$,
and the set of all non-isomorphic simple $\widetilde{\Lambda}$-modules, which is given by
$E\mapsto E^\varphi/\{x \in E^\varphi \mid \widetilde{\Lambda}.x = 0\}$.
\end{lemma}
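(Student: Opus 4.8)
The plan is to build the bijection $\omega$ out of two cleaner correspondences that both pass through the image algebra $C:=\varphi(\widetilde{\Lambda})=\Lambda a_0$, and to show that their composite is exactly the assignment $E\mapsto E^\varphi/\{x:\widetilde{\Lambda}.x=0\}$. Throughout I write $N_E:=\{x\in E^\varphi : \widetilde{\Lambda}.x=0\}$ and use freely that $\varphi\colon\widetilde{\Lambda}\to\Lambda$, $\widetilde{a}\mapsto aa_0$, is an algebra homomorphism. The single identity that drives everything is that $\ker\varphi$ kills $\widetilde{\Lambda}$ from the left: if $aa_0=0$ then $\widetilde{a}\cdot\widetilde{b}=\widetilde{aa_0b}=0$, so $\ker\varphi\cdot\widetilde{\Lambda}=0$. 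First I would record the elementary module-theoretic consequences. For a simple $\Lambda$-module $E$ one has $\widetilde{\Lambda}.E=\Lambda a_0E$, which is a $\Lambda$-submodule of the simple module $E$, hence $\widetilde{\Lambda}.E\in\{0,E\}$; thus the hypothesis $\widetilde{\Lambda}.E\neq0$ is equivalent to $a_0E\neq0$ and to $\widetilde{\Lambda}.E=E$. A direct check gives $N_E=\{x\in E:a_0x=0\}$, and $N_E$ is a $\widetilde{\Lambda}$-submodule of $E^\varphi$.

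Next I would prove that $\omega$ is well defined with values in simple $\widetilde{\Lambda}$-modules. Given $E$ with $a_0E\neq0$, the module $\overline{E}:=E^\varphi/N_E$ is nonzero because $N_E\subsetneq E$. If $\overline{W}\subseteq\overline{E}$ is a nonzero $\widetilde{\Lambda}$-submodule, lift it to $W\supsetneq N_E$ and pick $x\in W\setminus N_E$, so that $a_0x\neq0$; then $\widetilde{\Lambda}.x=\Lambda a_0x$ is a nonzero $\Lambda$-submodule of the simple module $E$, hence equals $E$, which forces $W=E$ and $\overline{W}=\overline{E}$. Therefore $\overline{E}$ is simple over $\widetilde{\Lambda}$, and functoriality of $E\mapsto E^\varphi$ (together with the naturality of $N_E$) shows that isomorphic $E$'s produce isomorphic $\overline{E}$'s, so $\omega$ is a well-defined map on isomorphism classes.

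For bijectivity I would factor $\omega$ through $C=\Lambda a_0$, which is at once a subring and a left ideal of $\Lambda$ and which is isomorphic to $\widetilde{\Lambda}/\ker\varphi$ via $\varphi$. Correspondence (I), between nondegenerate simple $\widetilde{\Lambda}$-modules and simple $C$-modules, is clean: for any simple $F$ with $\widetilde{\Lambda}.F=F$ the identity $\ker\varphi\cdot\widetilde{\Lambda}=0$ forces $\ker\varphi.F=0$, so $F$ is inflated from a simple $C$-module, and conversely every simple $C$-module pulls back along the surjection $\widetilde{\Lambda}\twoheadrightarrow C$ to a nondegenerate simple $\widetilde{\Lambda}$-module. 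Correspondence (II), between simple $C$-modules and simple $\Lambda$-modules $E$ with $a_0E\neq0$, is the substantive one: in one direction $E\mapsto E/N_E$ (a nondegenerate simple $C$-module, since the $\widetilde{\Lambda}$-action on $\overline{E}$ factors through $C$), and in the other direction one must reconstruct $E$ from a simple $C$-module $F$. Here I would use restriction and coinduction along $\varphi$: the canonical quotient $E^\varphi\twoheadrightarrow E^\varphi/N_E$ together with the tensor--hom adjunction embeds any candidate $E$ into the $\Lambda$-module $\Hom_{\widetilde{\Lambda}}(\Lambda,F)$ (with $\Lambda$ regarded as a $\widetilde{\Lambda}$-$\Lambda$-bimodule through $\varphi$), so one recovers $E$ as a simple submodule of this coinduced module and checks that $\omega$ returns $F$. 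Finally one verifies that the composite of (I) and (II) is precisely $E\mapsto E^\varphi/N_E$.

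The main obstacle is correspondence (II): showing that restriction and coinduction along $\varphi$ are mutually inverse on the relevant simples, concretely that $\Hom_{\widetilde{\Lambda}}(\Lambda,F)$ is nonzero with an essentially unique simple $\Lambda$-submodule $E$ satisfying $a_0E\neq0$ and $\omega(E)\cong F$. Two features make this delicate: $\widetilde{\Lambda}$ and $C$ need not be unital, so ``simple module'' must be read as ``simple nondegenerate module'' and the tensor/Hom constructions handled accordingly; and the existence of the required simple sub- and quotient modules, together with the finiteness needed to control the socle, is exactly where the hypothesis that $\Lambda$ is finitely generated over its centre enters. I expect this verification, namely that the unit and counit of the adjunction are isomorphisms on simple objects, using $\ker\varphi\cdot\widetilde{\Lambda}=0$, to be where essentially all the work lies; the first two steps above are routine once that identity is in hand.
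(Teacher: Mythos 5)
The first thing to note is that the paper does not prove Lemma \ref{2.4} at all: it is imported verbatim from \cite[Theorem 3.10]{KX3}, so there is no in-paper argument to compare against, and your proposal must be judged as a standalone proof. The parts you actually carry out are correct: the identities $\widetilde{\Lambda}.E=\Lambda a_0E$ and $N_E=\{x\in E\mid a_0x=0\}$, the simplicity and nondegeneracy of $E^\varphi/N_E$ via the lifting argument, and correspondence (I) between nondegenerate simple $\widetilde{\Lambda}$-modules and simple $C$-modules, driven by $\ker\varphi\cdot\widetilde{\Lambda}=0$. But this is only the routine forward half: well-definedness of $\omega$. The entire content of the theorem is that $\omega$ is a bijection, and that is exactly what your correspondence (II) is supposed to deliver --- and you explicitly defer it (``I expect this verification \dots to be where essentially all the work lies''). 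A plan plus an acknowledgement that the hard part remains is not a proof, so there is a genuine gap.

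Concretely, two steps of your coinduction scheme would need real ideas that the sketch does not supply. First, surjectivity requires $\Hom_{\widetilde{\Lambda}}(\Lambda,F)$ to contain a simple $\Lambda$-submodule at all; in the intended application $\Lambda=M_{n}(B)$ with $B$ affine is not artinian, coinduced modules are typically infinite dimensional, and such modules can have zero socle. You gesture at the finitely-generated-over-centre hypothesis as the cure, but no step of your outline actually uses that hypothesis, so its role remains a guess rather than an argument. (To be fair, once a simple submodule $E$ exists, your counit step does go through: if $f(1)=0$ for all $f\in E$ then $f(\lambda)=(\lambda f)(1)=0$ forces $E=0$, so evaluation at $1$ induces a nonzero $\widetilde{\Lambda}$-map $E^\varphi/N_E\to F$ between simples, hence an isomorphism.) Second, injectivity requires the ``essentially unique'' simple submodule you assert: non-isomorphic simples $E_1,E_2$ with $\omega(E_1)\cong\omega(E_2)=:F$ would both embed into $\Hom_{\widetilde{\Lambda}}(\Lambda,F)$, and nothing you say rules this out. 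The naive direct attempt shows why this is delicate: given an isomorphism $\theta\colon\omega(E_1)\to\omega(E_2)$ with $\theta(\bar{x}_1)=\bar{x}_2$, the candidate map $\lambda a_0x_1\mapsto\lambda a_0x_2$ need not be well defined, since $\lambda a_0x_1=0$ only yields $\lambda a_0x_2\in N_{E_2}$, i.e.\ $a_0\lambda a_0x_2=0$, not $\lambda a_0x_2=0$. Until the socle existence, the uniqueness statement, and the actual use of the centre-finiteness hypothesis are supplied, the proposal establishes well-definedness of $\omega$ but not the bijection that the lemma claims.
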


We conclude this section by a result of \cite{KX3} which is needed in Section 3.

\begin{lemma}\cite[Theorem 3.12 (2)]{KX3}\label{2.5}
Let $A$ be an affine cellular algebra with a cell chain $0=J_{0}\subset J_{1}\subset J_{2}\subset\cdots\subset J_{m}=A$ such that
each layer $J_{j}/J_{j-1}\cong \mathcal{A}(V_j, B_j, \phi_j)$. Then for $1\leq j\leq m$, $\phi_j$ is an isomorphism if and only if
the determinant $det(\phi^{(j)}_{st} )$ of $\phi_j$ is a unit in $B_j$. In particular, if all $\phi_j$ are isomorphisms,
then $A$ is isomorphic, as an affine cellular $K$-algebra, to $\bigoplus_{j=1}^m M_{n_j}(B_j)$, where $n_j$ is the dimension of $V_j$.
\end{lemma}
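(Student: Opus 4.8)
The plan is to prove the two assertions separately, using the swich-algebra description of each layer. Throughout, fix $j$ and abbreviate $B=B_j$, $V=V_j$, $n=n_j$, and write $\Phi=(\phi^{(j)}_{st})\in M_n(B)$ for the matrix of $\phi_j$ in a chosen basis of $V$, so that $J_j/J_{j-1}\cong \mathcal{A}(V,B,\phi_j)\cong S(M_n(B),\Phi)$ by Lemma \ref{2.2} and Definition \ref{2.3}.

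For the first assertion, I would first make precise that ``$\phi_j$ is an isomorphism'' means that the $B$-linear endomorphism of $B^n$ determined by $\Phi$ (equivalently, the swich homomorphism $\varphi\colon \widetilde a\mapsto a\Phi$ appearing before Lemma \ref{2.4}) is bijective. This reduces the claim to the standard fact that a square matrix $\Phi$ over the commutative ring $B$ is invertible if and only if $\det\Phi\in B^\times$. One direction is the adjugate (Cramer) formula $\Phi^{-1}=(\det\Phi)^{-1}\operatorname{adj}(\Phi)$; the converse follows from multiplicativity of the determinant, since $\Phi\Psi=I$ forces $\det\Phi\cdot\det\Psi=1$. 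This step is essentially bookkeeping.

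For the second assertion I would argue by induction on the chain length $m$. When every $\phi_j$ is an isomorphism, part (a) and the invertibility of $\Phi_j$ show that right multiplication by $\Phi_j$ is bijective on $M_{n_j}(B_j)$, so the swich homomorphism gives an algebra isomorphism $S(M_{n_j}(B_j),\Phi_j)\xrightarrow{\sim}M_{n_j}(B_j)$; hence each layer, and in particular the bottom cell ideal $J_1\cong M_{n_1}(B_1)$, is a unital algebra. Let $e$ be the image in $J_1$ of the identity matrix. The crucial observation is that $e$ is a central idempotent of $A$: for any $a\in A$ both $ea$ and $ae$ lie in the two-sided ideal $J_1$, so $eae=ea$ and $eae=ae$, whence $ea=ae$. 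Moreover $e$ is fixed by $\ast$, because $e^\ast$ is again a two-sided identity of the $\ast$-stable ideal $J_1$ and the identity is unique. Consequently $A=eA\oplus(1-e)A=J_1\oplus(1-e)A$ as a $\ast$-stable direct product of $K$-algebras, where $(1-e)A\cong A/J_1$ is affine cellular with cell chain $0\subset J_2/J_1\subset\cdots\subset J_m/J_1$ of length $m-1$ whose layers are exactly $\mathcal{A}(V_j,B_j,\phi_j)$ for $j\geq 2$, again all with invertible forms. Applying the inductive hypothesis to $A/J_1$ and reassembling yields $A\cong\bigoplus_{j=1}^m M_{n_j}(B_j)$, and tracking the involution (each $M_{n_j}(B_j)$ carrying the cellular structure built from $\sigma_j$ and transposition) upgrades this to an isomorphism of affine cellular algebras.

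The routine part is the matrix computation in (a); the main obstacle is the splitting in (b), namely verifying that the abstract isomorphism $J_1\cong M_{n_1}(B_1)$ really supplies an identity element of $J_1$ inside $A$ that is both central and $\ast$-invariant, so that the extension $0\to J_1\to A\to A/J_1\to 0$ splits compatibly with the cellular data. Once the central $\ast$-fixed idempotent is in hand, the induction and the identification of the summands with matrix algebras are formal.
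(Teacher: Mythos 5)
The paper does not prove this lemma; it is imported verbatim from \cite[Theorem 3.12(2)]{KX3}, so there is no in-paper argument to compare against. Your reconstruction is correct and follows the standard route: part (a) is exactly the adjugate/determinant criterion for invertibility of a matrix over the commutative ring $B_j$, and part (b) is the usual induction in which the invertibility of $\phi_1$ makes the bottom layer $J_1\cong S(M_{n_1}(B_1),\Phi_1)\cong M_{n_1}(B_1)$ a unital ideal, whose identity is a central $\ast$-fixed idempotent splitting off $J_1$ as a direct factor; the only point you leave implicit is the verification that the isomorphism $\widetilde a\mapsto a\Phi_j$ carries the layer involution $u\otimes b\otimes v\mapsto v\otimes\sigma(b)\otimes u$ to the expected involution on $M_{n_j}(B_j)$ (using $\sigma(\Phi_j)^{T}=\Phi_j$), which is the routine bookkeeping needed to upgrade the ring isomorphism to one of affine cellular algebras, as you correctly flag.
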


The algebra $\bigoplus_{j=1}^m M_{n_j}(B_j)$ will be called the asymptotic algebra of the
affine cellular algebra $A$.

\bigskip

\section{Semisimplicity}

In this section, we study the semisimplicity of affine cellular algebras.
We first need to review some definitions and notations from commutative algebra.
The main references here are \cite{Ke} and \cite{U}.

\begin{definition}\label{4.1}
Let $R$ be a commutative ring with identity. The spectrum of $R$ is defined to be
$Spec(R):=\{\mathfrak{P}\subseteq R\ |\ \mathfrak{P}\ is\ a \ prime\ ideal \}$.
\end{definition}

It is well-known that one can put a topology on $Spec(R)$, which is the so-called Zariski topology.
Then we can give the definition of an affine scheme.

\begin{definition}\label{4.3}
An affine scheme is a pair $(Spec(R), \mathscr O_R)$ consisting a spectrum of $R$
equipped with Zariski topology, and its structure sheaf $\mathscr O_R$. If $R$ is a reduced ring,
then $(Spec(R), \mathscr O_R)$ is called reduced.
The dimension of $(Spec(R), \mathscr O_R)$ is defined to be the Krull dimension of $R$.
\end{definition}

We refer the reader to \cite[Definition 2.20]{U} for the definition of the structure sheaf mentioned in Definition \ref{4.3}. By abuse of notations,
we will often write simply $Spec(R)$ for the affine scheme $(Spec(R), \mathscr O_R)$.

\smallskip	

Based on the above preparation, we can define the associated scheme to an affine cellular algebra, which will play a key role in this section.

\begin{definition}\label{4.4}
Let $A$ be an affine cellular algebra. We call $$Spec[A]:=Spec(\prod_{j=1}^{m}B_j)$$ the associated scheme to $A$.
\end{definition}	

Given an affine cellular algebra $A$, we will show that some ring theoretical properties of $A$,
for example, semisimplicity, are partially determined by  $Spec[A]$.

Let us first consider Jacobson semisimplicity. For this goal, we need the following lemma about a swich algebra.
For simplicity of description, we stipulate that $0$ is a zero-divisor.
	
\begin{lemma}\label{4.5}
Let $R$ be a unital ring such that $R$ is finitely generated over its center and $\widetilde R=S(R, a_0)$, a swich algebra of $R$.
Then $\widetilde R$ is Jacobson semisimple if and only if $R$ is Jacobson semisimple with $a_0$ not a zero-divisor.
\end{lemma}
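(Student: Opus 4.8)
The plan is to reduce both implications to a single explicit description of the Jacobson radical $J(\widetilde R)$ in terms of $J(R)$ and the element $a_0$, and then read off the equivalence. The guiding principle is the standard fact that, for any (not necessarily unital) ring, the Jacobson radical is the intersection of the annihilators of all its simple modules. Since Lemma \ref{2.4} already classifies the simple $\widetilde R$-modules in terms of the simple $R$-modules, this fact should convert the statement about $\widetilde R$ into a computation carried out entirely inside the unital ring $R$.

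Concretely, I would first recall that $\widetilde a$ acts through $\varphi(\widetilde a)=aa_0$, and that, by Lemma \ref{2.4}, every simple $\widetilde R$-module has the form $E^\varphi/N_E$, where $E$ runs over the simple $R$-modules with $\widetilde R.E\neq 0$ and $N_E=\{x\in E\mid \widetilde R.x=0\}$. Because $1\in R$, one checks that $\widetilde R.E\neq 0$ is equivalent to $a_0E\neq 0$, and that $N_E$ is exactly $\{x\in E\mid a_0x=0\}$, the kernel of the action of $a_0$ on $E$. A direct computation then shows that $\widetilde a$ annihilates $E^\varphi/N_E$ if and only if $a_0(aa_0)E=0$, i.e. $a_0aa_0\in\mathrm{Ann}_R(E)$. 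Since this last condition holds automatically for the simple modules with $a_0E=0$, intersecting over all simple $E$ gives the key formula
$$J(\widetilde R)=\{\widetilde a\mid a_0aa_0\in J(R)\}.$$

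With this formula in hand both directions are short. If $J(R)=0$ and $a_0$ is not a zero-divisor, then $\widetilde a\in J(\widetilde R)$ gives $a_0(aa_0)=0$, whence $aa_0=0$ and then $a=0$, so $J(\widetilde R)=0$. Conversely, suppose $J(\widetilde R)=0$. If $a_0=0$ (and $R\neq 0$), the formula forces $J(\widetilde R)=\widetilde R\neq 0$, a contradiction; and if some nonzero $x$ satisfies $xa_0=0$ or $a_0x=0$, then $a_0xa_0=0\in J(R)$ exhibits a nonzero element $\widetilde x\in J(\widetilde R)$, again a contradiction, so $a_0$ is not a zero-divisor. Finally, for any $r\in J(R)$ the ideal property of $J(R)$ gives $a_0ra_0\in J(R)$, hence $\widetilde r\in J(\widetilde R)=0$ and $r=0$; that is, $J(R)=0$.

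The point requiring the most care is that $\widetilde R$ is in general a non-unital ring, possessing an identity only when $a_0$ is invertible. I must therefore invoke the annihilator description of the Jacobson radical in the non-unital setting, while being careful to apply Lemma \ref{2.4} to $R$, which is unital and finite over its centre as that lemma requires. The only remaining delicate points are the degenerate cases $a_0=0$ and $R=0$, which the stipulation that $0$ is a zero-divisor is designed to absorb.
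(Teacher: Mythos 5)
Your proof is correct, and at its core it travels the same road as the paper's: both arguments rest on Lemma \ref{2.4} together with the annihilator description of the Jacobson radical, and both reduce everything to the observation that $\widetilde a$ kills the simple module attached to $E$ precisely when $a_0aa_0$ annihilates $E$. The genuine differences are two. First, you distill the computation into the single closed formula $J(\widetilde R)=\{\widetilde a\mid a_0aa_0\in J(R)\}$, which the paper never states; this is a cleaner organization, it makes both implications one-line consequences, and it is a slightly stronger statement that could be reused elsewhere. Second, in the forward direction the paper obtains ``$a_0$ is not a zero-divisor'' by passing through semiprimeness and citing \cite[Lemma 2.1]{CKL}, whereas you derive it directly from your formula (a one-sided annihilation $xa_0=0$ or $a_0x=0$ already forces $a_0xa_0=0\in J(R)$, hence $\widetilde x\in J(\widetilde R)=0$); this makes the proof self-contained at no extra cost. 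Your cautionary remarks are also well placed: the non-unital radical must indeed be taken as the intersection of annihilators of simple modules (with the convention $J(\widetilde R)=\widetilde R$ when there are none, which your formula handles consistently when $a_0\in J(R)$), and the degenerate case $R=0$ is harmlessly excluded, exactly as in the paper.
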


\begin{proof}
By Lemma \ref{2.4},  there is a bijection between the set of
non-isomorphic simple $R$-modules $E$ with $\widetilde R.E\neq 0$, and the set of all non-isomorphic simple $\widetilde R$
-modules, which is given by $E\mapsto E^{\varphi}/N$, where $N$ is $\{x\in E\mid\widetilde R.x=0\}$.
For each  maximal left ideal ${\mathfrak m}$ of $R$, denote the corresponding simple $R$-module $R/{\mathfrak m}$ by $E_{\mathfrak m}$.

($\Rightarrow$) Suppose that $\widetilde R $ is Jacobson semisimple. Then $\widetilde R $ is semiprime.
Consequently, we have from \cite[Lemma 2.1]{CKL} that $a_0$ is not a zero-divisor. Take an element $a$ of $J(R)$.
Then $aE_{\mathfrak m}=0$ for arbitrary $\mathfrak m$. Assume that $\widetilde R.E_{\mathfrak m}\neq 0$.
For $x+N_{\mathfrak m}\in E_{\mathfrak m}^{\varphi}/N_{\mathfrak m}$, we have
$$\widetilde{a}.(x+N_{\mathfrak m})=\widetilde{a}.x+N_{\mathfrak m}=aa_0.x+N_{\mathfrak m}=0+N_{\mathfrak m},$$
where the last equality holds because $a_0.x\in E_{\mathfrak m}$ and $aE_{\mathfrak m}=0$.
This implies that $\widetilde{a}$ annihilates all simple $\widetilde{R}$-modules, that is,
$\widetilde{a}\in J(\widetilde{R})$, and hence $\widetilde{a}=0$. As a result, $a=0$ and $R$ is Jacobson semisimple.
		
($\Leftarrow$) Let $R$ be a Jacobson semisimple ring. Suppose that $\widetilde{a}\in J(\widetilde{R})$ and $\mathfrak m$ an arbitrary maximal left ideal of $R$.
Then $\widetilde{a}.(E_{\mathfrak m}^{\varphi}/N_{\mathfrak m})=0$.
Note that $R$ is a unital ring, and it is clear that $1_R\notin \mathfrak m$.
For arbitrary $x\in R$, denote $x+\mathfrak m\in E_{\mathfrak m}$ by $[x]$.
We have $$0=\widetilde{a}.([1]+N_{\mathfrak m})=\widetilde{a}[1]+N_{\mathfrak m}=[aa_0]+N_{\mathfrak m},$$ and we deduce
$[aa_0]\in N_{\mathfrak m}$, or $\widetilde R.[aa_0]=0$ in $E_{\mathfrak m}$. In particular, $\widetilde{1}.[aa_0]=0$, and thus $a_0aa_0\in \mathfrak m$.
As a result, $a_0aa_0\in J(R)$ because of $\mathfrak m$ being arbitrary. Now the Jacobson semisimplicity of $R$ forces $a_0aa_0$ to be zero.
Combining this result with the fact that $a_0$ is not a zero-divisor yields $a=0$. Consequently, the radical of $\widetilde{R}$ is zero and this completes the proof.
\end{proof}

Now we can give a sufficient condition for an affine cellular algebra being Jacobson semisimple.

\begin{theorem}\label{4.6}
Let $A$ be an affine cellular algebra. Then $A$ is Jacobson semisimple if
\begin{enumerate}
\item[(1)] $Spec[A]$ is a reduced scheme;
\item[(2)] all $\phi _j$ are not zero-divisors.
\end{enumerate}
\end{theorem}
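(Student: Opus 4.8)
The plan is to argue by induction on the length $m$ of a cell chain $0=J_0\subset J_1\subset\cdots\subset J_m=A$, reducing the problem to two independent tasks: showing that every layer $J_j/J_{j-1}$ is Jacobson semisimple, and then gluing this information along the chain using the standard behaviour of the Jacobson radical under passage to two-sided ideals and quotients.

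First I would treat a single layer. By the discussion following Lemma \ref{2.2}, each layer satisfies $J_j/J_{j-1}\cong\mathcal{A}(V_j,B_j,\phi_j)\cong S(M_{n_j}(B_j),\phi_j)$, a swich algebra of $M_{n_j}(B_j)$ with respect to the matrix $a_0=\phi_j$. Since $Spec[A]=Spec(\prod_j B_j)$ is reduced, each factor $B_j$ is reduced; because $B_j$ is an affine $K$-algebra over a field, it is a Jacobson ring, so its Jacobson radical coincides with its nilradical and hence $J(B_j)=0$. Consequently $J(M_{n_j}(B_j))=M_{n_j}(J(B_j))=0$, i.e.\ $M_{n_j}(B_j)$ is Jacobson semisimple, and it is finitely generated (indeed free of rank $n_j^2$) over its centre $B_j$. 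As $\phi_j$ is assumed not to be a zero-divisor, Lemma \ref{4.5} applies and yields that $S(M_{n_j}(B_j),\phi_j)$, and therefore each layer $J_j/J_{j-1}$, is Jacobson semisimple.

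Next I would glue. For $m=1$ the algebra $A$ equals the single layer $J_1$, so it is Jacobson semisimple by the previous paragraph. For $m>1$, the ideal $J_1$ is a two-sided ideal of $A$ that is Jacobson semisimple as a (possibly non-unital) ring, while the quotient $A/J_1$ is again affine cellular with cell chain $0\subset J_2/J_1\subset\cdots\subset J_m/J_1=A/J_1$ of length $m-1$; its associated scheme is $Spec(\prod_{j\geq 2}B_j)$, still reduced, and the remaining forms $\phi_2,\dots,\phi_m$ are still not zero-divisors, so the induction hypothesis gives $J(A/J_1)=0$. Invoking the two general facts $J(J_1)=J_1\cap J(A)$ for the two-sided ideal $J_1$ and $\pi(J(A))\subseteq J(A/J_1)$ for the projection $\pi\colon A\to A/J_1$, the latter forces $J(A)\subseteq J_1$, whence $J(A)=J(A)\cap J_1=J(J_1)=0$.

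The main obstacle I anticipate is the step asserting that a reduced $B_j$ is Jacobson semisimple: this is exactly where the hypothesis that $K$ is a field (rather than a general principal ideal domain) is essential, via the Nullstellensatz characterisation of finitely generated algebras over a field as Jacobson rings. A secondary point requiring care is the use of the identity $J(J_1)=J_1\cap J(A)$ for the non-unital ideal $J_1$, together with the fact that Lemma \ref{4.5} outputs Jacobson semisimplicity for a swich algebra that need not possess an identity; both must be handled in the non-unital setting.
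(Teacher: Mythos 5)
Your proof is correct, and while the first half (the layer analysis) coincides exactly with the paper's --- each layer is $S(M_{n_j}(B_j),\phi_j)$, reducedness of $Spec[A]$ gives $J(B_j)=0$ hence $J(M_{n_j}(B_j))=0$, and Lemma \ref{4.5} upgrades this to the swich algebra --- your gluing step is genuinely different. The paper does not induct on the chain length; instead it takes $r=\sum_j r_j\in J(A)$ with $r_j\in J_j'$ and uses the classification of simple modules of affine cellular algebras from \cite[Theorem 3.12]{KX3}: a simple module belonging to the top layer is killed by $J_{m-1}$, hence by $r_1,\dots,r_{m-1}$, so $r_m$ annihilates all top-layer simples and must vanish by the Jacobson semisimplicity of that layer; one then peels off $r_{m-1},r_{m-2},\dots$ in turn. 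You replace this module-theoretic peeling by the purely ring-theoretic identities $J(J_1)=J_1\cap J(A)$ for a two-sided ideal and $\pi(J(A))\subseteq J(A/J_1)$, plus induction on $m$. Your route is more self-contained on the representation-theory side (it never invokes the parametrization of simples by layers) but leans on the non-unital radical theory, in particular the identity $J(I)=I\cap J(R)$ for ideals regarded as rings without identity --- a standard but nontrivial fact that you correctly flag and should cite. Your remark that the step ``reduced affine algebra $\Rightarrow$ Jacobson semisimple'' needs the ground ring to be Jacobson (e.g.\ a field) is well taken: the paper uses this step silently, and its own subsequent example with $K=F[[x]]$ shows the ground ring in this section is not automatically a field, so your explicit caveat is if anything more careful than the source.
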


\begin{proof}
The affine cellularity of $A$ implies that a layer $J_j/J_{j-1}$ is isomorphic to
a swich algebra $\widetilde M_{n_j}(B_j)=S(M_{n_j}(B_j), \,\phi_j)$.
Since $Spec[A]$ is a reduced scheme, each $B_j$ is a reduced ring. Note that a reduced affine algebra is Jacobson semisimple.
Then $M_{n_j}(B_j)$ is Jacobson semisimple because the matrix algebra over a Jacobson semisimple ring is Jacobson semisimple too.
Note that $\phi_j$ is not a zero-divisor. Then by Lemma \ref{4.5}, we have $\widetilde M_{n_j}(B_j)$ is semisimple.

Take an element $r \in J(A)$ and assume that  $r=\sum_{j = 1}^{m} r_j$, where $r_j\in J_j'$.
Then $r$ annihilates all simple modules. According to the representation theory of affine cellular algebras \cite[Theorem 3.12]{KX3},
the actions of $r_i, \,\,i=1,2,\dots m-1$ on simple modules belonging to the top layer are all zeros.
Thus $r_m$ annihilates all simple modules of the top layer. Now the Jacobson semisimplicity of each layer proved above forces $r_m$ being zero.
By continuing this process finitely many times, we obtain $r_i=0$, for $i=1,2,\dots, m$, that is, $r=0$. This completes the proof.
\end{proof}

\begin{corollary}\label{4.7}
For a unital affine cellular algebra $A$, if $Spec[A]$ is a reduced scheme and $\phi _j$ are not zero-divisors for all $j$, then A is semiprime.
\end{corollary}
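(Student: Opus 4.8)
The plan is to read the conclusion off directly from Theorem \ref{4.6}. The two hypotheses of the corollary---that $Spec[A]$ is a reduced scheme and that every $\phi_j$ is not a zero-divisor---are verbatim the hypotheses (1) and (2) of Theorem \ref{4.6}. So the first step would be to apply that theorem to obtain $J(A)=0$, that is, $A$ is Jacobson semisimple.

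The second step would be to invoke the hierarchy recorded in the introduction, $\{\text{Jacobson semisimple}\}\subset\{\text{semiprime}\}$, to conclude that $A$ is semiprime. Should a self-contained argument be preferred, I would instead recall the standard ring-theoretic fact that the prime radical of a unital ring---equivalently, the sum of its nilpotent two-sided ideals---is always contained in the Jacobson radical; thus $J(A)=0$ leaves $A$ with no nonzero nilpotent ideals, which is exactly the definition of semiprimeness. This is the point at which the unitality assumption on $A$ (guaranteed in any case by Definition \ref{2.1}) is used.

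I expect no genuine obstacle here: the statement is a formal consequence of Theorem \ref{4.6} together with a containment of radicals that holds for every unital ring. The only mild care required is to fix the intended meaning of \emph{semiprime} (no nonzero nilpotent two-sided ideals) and to confirm that the inclusion of the prime radical in the Jacobson radical applies in the present setting, which it does precisely because every affine cellular algebra is unital.
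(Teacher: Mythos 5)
Your proposal is correct and matches the paper's (implicit) argument exactly: the corollary is stated without proof precisely because it follows from Theorem \ref{4.6} together with the containment of Jacobson semisimple rings in semiprime rings recorded in the introduction. Your additional remark that the prime radical sits inside the Jacobson radical of any unital ring is the standard justification for that containment and adds nothing beyond what the paper intends.
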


The following is an example where the sufficient criterion given in Theorem \ref{4.6} is not necessary.
The example also implies that it is likely far away from a characterisation of Jacobson semisimplicity.

\begin{example}
Let $F$ be a field and let $K$ be the formal power series ring $F[[x]]$.
Then the Jacobson radical of $K$ is the ideal generated by $x$. This implies that $K$ is not Jacobson semisimple.
Let $A=K[x]$. Then $A$ is Jacobson semisimple (\cite{H} Page 433 Exercise 14 (c)). We claim that $A$ is an affine cellular algebra.
In fact, we can take a cell chain of $A$ to be $0\subset (x)\subset A$ and define $B_1=A$, $V_1$ to be the free $K$-module with basis $\{x\}$ and $B_2=K$, $V_2=K$.
Since $B_2=K$ is not a reduced ring, $Spec[A]$ is not a reduced scheme.
\end{example}

Let us begin to study semisimplicity of an affine cellular algebra over a field $K$ now. As is well-known, a semisimple algebra is left artinian.
So we first give a sufficient and necessary condition for an affine cellular algebra to be left artinian as follows.

\begin{lemma}\label{4.8}
Let $A$ be an affine cellular algebra. Then $A$ is a left artin ring if and only if $Spec[A]$ is a $0$-dimensional scheme.
\end{lemma}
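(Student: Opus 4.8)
The plan is to reduce the lemma to the equivalence of three conditions: that $Spec[A]$ is $0$-dimensional, that every $B_j$ is finite-dimensional over $K$, and that $A$ itself is finite-dimensional over $K$; the assertion then follows because a finite-dimensional algebra over a field is left artinian. First I would record that, since $Spec[A]=Spec(\prod_{j=1}^m B_j)$ and the prime spectrum of a finite product is the disjoint union of the spectra of the factors, $Spec[A]$ is $0$-dimensional precisely when each $B_j$ has Krull dimension $0$. Each $B_j$ is an affine, hence Noetherian, commutative $K$-algebra, so Krull dimension $0$ is equivalent to $B_j$ being artinian, and by the Nullstellensatz a $0$-dimensional affine $K$-algebra is finite-dimensional over $K$. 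Finally, the $K$-module identification $J_j/J_{j-1}\cong V_j\otimes_K B_j\otimes_K V_j$ with $V_j$ free of finite rank $n_j$ yields $\dim_K A=\sum_{j=1}^m n_j^2\dim_K B_j$, so (granting the standard convention $V_j\neq 0$) $A$ is finite-dimensional over $K$ exactly when every $B_j$ is.

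For the sufficiency direction, if $Spec[A]$ is $0$-dimensional then by the reductions above $A$ is finite-dimensional over $K$, and a finite-dimensional $K$-algebra is automatically left artinian. This disposes of one implication immediately.

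For the converse I would argue by contraposition, and the crucial point is to \emph{avoid} detecting the dimension of $B_j$ through the simple $A$-modules: as the preceding example indicates, the layers on which $\phi_j$ degenerates contribute no simple modules, so the Krull dimension of $B_j$ is invisible representation-theoretically. Instead I would manufacture descending chains directly from the ideal structure of Lemma \ref{2.2}(2). Suppose some $B_j$ has positive Krull dimension. Then $A/J_{j-1}$ is a quotient of $A$, hence left artinian whenever $A$ is, and $J_j/J_{j-1}\cong\mathcal{A}(V_j,B_j,\phi_j)$ is an affine cell ideal of $A/J_{j-1}$. Being a Noetherian non-artinian commutative ring, $B_j$ admits an infinite strictly descending chain of ideals $I_1\supsetneq I_2\supsetneq\cdots$. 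By Lemma \ref{2.2}(2) each $V_j\otimes_K I_k\otimes_K V_j$ is a two-sided, hence left, ideal of $A/J_{j-1}$, and since $V_j\otimes_K(-)\otimes_K V_j\cong(-)^{\oplus n_j^2}$ as $K$-modules with $n_j\geq 1$, strict inclusions are preserved. This gives an infinite strictly descending chain of left ideals, contradicting the left artinian hypothesis. Hence every $B_j$ is $0$-dimensional and so is $Spec[A]$.

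The main obstacle is this converse, and the key decision is the choice of invariant: one must read off the Krull dimension of each $B_j$ not from modules but through the explicit ideals $V_j\otimes_K I\otimes_K V_j$, and one must reach the higher layers by passing to the quotients $A/J_{j-1}$ so that Lemma \ref{2.2}(2) applies to $J_j/J_{j-1}$ as a genuine affine cell ideal there. The only routine points left to check are the strictness of the induced chain and the harmless edge case $n_j=0$, where the corresponding layer is zero and may simply be omitted from the cell chain.
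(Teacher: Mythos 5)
Your proof is correct and follows essentially the same route as the paper: the sufficiency direction reduces to each $B_j$ being finite-dimensional (the paper cites \cite[Theorem 5.11]{Ke} for this), and the converse uses Lemma \ref{2.2}(2) to turn a strictly descending chain of ideals of a non-artinian $B_j$ into a descending chain of ideals of the left artinian quotient $A/J_{j-1}$, exactly as in the paper. Your added remarks on strictness of the induced chain and the $n_j=0$ edge case are harmless refinements of the same argument.
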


\begin{proof}
If $Spec[A]$ is a $0$-dimensional scheme, then the Krull dimension of every $B_j$ is zero.
This is equivalent to that all $B_j$  are finite dimensional $K$-algebras by \cite[Theorem 5.11]{Ke},
and hence the affine cellular algebra $A$ is a finite dimensional $K$-algebra. So $A$ is left artinian.
		
Conversely, assume that $A$ is a left artin ring.
We claim that for arbitrary $i\in \{1, \cdots, m\}$, $B_i$ is an artin ring. In fact, we have from $A$ being left artinian
that $A/J_{i-1}$ is a left artin ring and $J_i'$ is a left artin $A/J_{i-1}$-module.
If $B_i$ is not artinian, then there exists an infinite descending chain of ideals of $B_i$
$$I_1  \supset I_2  \supset I_3  \supset \cdots  \supset I_n  \supset \cdots$$
As a result, we obtain an infinite descending chain of submodules of $J_i'$
$$ V_1 \otimes I_1\otimes V_1  \supset V_1 \otimes I_2\otimes V_1  \supset V_1 \otimes I_3\otimes V_1  \supset \cdots  \supset V_1 \otimes I_n\otimes V_1  \supset \cdots$$
due to Lemma \ref{2.2}. It is a contradiction. Then $Spec[A]$ is a $0$-dimensional scheme, which follows from all $B_i$ being artinian.
\end{proof}

Employing \cite[Theorem 5.11]{Ke} again, we get a direct corollary of Lemma \ref{4.8} as follows.

\begin{corollary}\label{4.9}
Let $A$ be an affine cellular algebra. Then the following statements are equivalent.
\begin{enumerate}
\item[(1)] $A$ is a left artin algebra.
\item[(2)] $Spec[A]$ is a $0$-dimensional scheme.
\item[(3)] $A$ is a finite dimensional $K$-algebra.
\item[(4)] Each $B_j$ is a finite dimensional $K$-algebra.
\end{enumerate}
\end{corollary}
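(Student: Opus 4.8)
The plan is to derive all four equivalences by combining Lemma~\ref{4.8} with \cite[Theorem 5.11]{Ke} and the $K$-module structure of $A$. Lemma~\ref{4.8} already supplies the equivalence of (1) and (2) directly, so the real task reduces to weaving (3) and (4) into this chain and closing the loop.

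First I would unwind condition (2). By Definition~\ref{4.4} we have $Spec[A]=Spec(\prod_{j=1}^{m}B_j)$, and since the spectrum of a finite product decomposes as the disjoint union $\coprod_{j}Spec(B_j)$, its Krull dimension is the maximum of the Krull dimensions of the individual $B_j$. Hence (2) holds if and only if every $B_j$ has Krull dimension zero. Applying \cite[Theorem 5.11]{Ke} to each affine $K$-algebra $B_j$, which identifies Krull dimension zero with being finite dimensional over $K$, rephrases this as the assertion that each $B_j$ is a finite dimensional $K$-algebra, i.e.\ condition (4). This establishes $(2)\Leftrightarrow(4)$, and it is exactly the step already rehearsed inside the proof of Lemma~\ref{4.8}.

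Next I would connect (4) with (3) using the cell chain. By Lemma~\ref{2.2} each layer satisfies $J_j'\cong V_j\otimes_K B_j\otimes_K V_j$ as a $K$-module, and $A=\bigoplus_{j=1}^{m}J_j'$ as a $K$-module, so $\dim_K A=\sum_{j=1}^{m}(\rank V_j)^2\,\dim_K B_j$. Since each $V_j$ has finite rank, this sum is finite if and only if every $B_j$ is finite dimensional, which gives $(3)\Leftrightarrow(4)$. Finally, a finite dimensional algebra over a field is automatically left artinian, so $(3)\Rightarrow(1)$; combined with the equivalence $(1)\Leftrightarrow(2)$ from Lemma~\ref{4.8} and the chain just built, all four conditions become equivalent.

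I do not expect a genuine obstacle here: the argument is simply an assembly of a standard commutative-algebra dimension criterion with the finite-rank bookkeeping of the cell chain. The only point requiring a little care is the reduction of the Krull dimension of $\prod_{j}B_j$ to that of its factors, which I would justify through the disjoint-union decomposition of the spectrum of a finite product, after which every remaining implication is immediate.
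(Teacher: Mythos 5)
Your proposal is correct and follows essentially the same route as the paper: the paper states this corollary as a direct consequence of Lemma \ref{4.8} together with a second application of \cite[Theorem 5.11]{Ke}, which is precisely the combination you assemble (the equivalence $(2)\Leftrightarrow(4)$ via Krull dimension, the layer-by-layer dimension count for $(3)\Leftrightarrow(4)$, and the standard facts linking finite dimensionality with the artinian property). Nothing in your argument deviates from or adds to what the paper intends.
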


Now we can give a sufficient and necessary condition for an affine cellular algebra being semisimple.

\begin{theorem}\label{4.10}
Let $K$ be a field and let $A$	be an affine cellular $K$-algebra. Then $A$ is semisimple if and only if
\begin{enumerate}
\item[(1)] $Spec[A]$ is a reduced $0$-dimensional scheme;
\item[(2)] $\phi _j$ is invertible for each $j\in\{1,2,\dots m\}$.
\end{enumerate}
\end{theorem}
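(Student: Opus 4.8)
The plan is to run both implications through the structural result Lemma~\ref{2.5}, which identifies $A$ with its asymptotic algebra $\bigoplus_{j=1}^m M_{n_j}(B_j)$ as soon as all the forms $\phi_j$ are invertible. For the easy direction $(\Leftarrow)$, I would first extract from hypothesis~(1) that each $B_j$ is a reduced affine $K$-algebra of Krull dimension $0$; by Corollary~\ref{4.9} such a $B_j$ is finite-dimensional over $K$, hence a reduced Artinian commutative ring, and therefore a finite product of finite field extensions of $K$. In particular every $B_j$ is semisimple, so every matrix algebra $M_{n_j}(B_j)$ is semisimple. Hypothesis~(2) then lets me invoke Lemma~\ref{2.5} to write $A\cong\bigoplus_{j=1}^m M_{n_j}(B_j)$, a finite direct sum of semisimple algebras, which is semisimple.

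For the converse $(\Rightarrow)$, suppose $A$ is semisimple. Then $A$ is left Artinian, so Corollary~\ref{4.9} immediately yields that $Spec[A]$ is $0$-dimensional. The heart of the argument is to show that each $\phi_j$ is invertible, and here I would use that in a unital semisimple algebra every two-sided ideal is a direct summand cut out by a central idempotent. Applying this to the cell chain $0=J_0\subset\cdots\subset J_m=A$ shows that each quotient $J_j'=J_j/J_{j-1}$ is isomorphic, as a ring, to a direct summand $C_j$ of $A$; in particular each $J_j'$ is \emph{unital}. On the other hand $J_j'\cong S(M_{n_j}(B_j),\phi_j)$, and a one-line computation with the swich product $\widetilde a\cdot\widetilde b=\widetilde{a\phi_j b}$ shows that such a swich algebra possesses an identity only if $\phi_j$ admits a two-sided inverse in $M_{n_j}(B_j)$, i.e. only if $\phi_j$ is invertible. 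This gives~(2). Feeding invertibility back into Lemma~\ref{2.5} produces $A\cong\bigoplus_j M_{n_j}(B_j)$; since a direct factor of a semisimple ring is semisimple, each $M_{n_j}(B_j)$ is semisimple, which forces each $B_j$ to be semisimple and hence reduced, yielding~(1).

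I expect the only genuine obstacle to be the invertibility step in $(\Rightarrow)$; the remaining pieces are routine applications of Lemma~\ref{2.5} and of the elementary fact that a finite-dimensional commutative $K$-algebra is semisimple precisely when it is reduced. The clean route through the obstacle is the observation that the layers of a semisimple algebra inherit a unit from the ambient Wedderburn decomposition, combined with the equivalence that $S(M_n(B),\phi)$ is unital exactly when $\phi$ is a unit; should one prefer to avoid the unit argument, the same conclusion follows by checking directly that a non-invertible $\phi_j$ (equivalently, $\det\phi_j$ a non-unit by Lemma~\ref{2.5}) yields a nonzero square-zero ideal inside $J_j'$, contradicting semiprimeness. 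I would also record in passing that each $V_j\neq 0$, so that the layers are genuinely nonzero and the identification with a swich algebra is not vacuous.
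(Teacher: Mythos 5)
Your argument is correct, but it reaches the key step --- invertibility of $\phi_j$ in the forward direction --- by a genuinely different route from the paper. The paper first cites \cite[Proposition 2.5]{CKL} to conclude from the semisimplicity of the layer $J_j'$ (an ideal in the semisimple quotient $A/J_{j-1}$) that $B_j$ is reduced and $\phi_j$ is not a zero-divisor, and then invokes the dichotomy of \cite[Theorem 1.2.1]{DK} (in a finite-dimensional unital algebra every element is invertible or a zero-divisor, finite-dimensionality coming from Corollary \ref{4.9}) to upgrade ``non-zero-divisor'' to ``invertible''. You instead observe that an ideal of a semisimple ring is cut out by a central idempotent and hence is unital, and that the swich algebra $S(M_{n_j}(B_j),\phi_j)$ admits an identity only if $\phi_j$ is a unit of $M_{n_j}(B_j)$ --- a clean, self-contained replacement for both external citations; you then recover the reducedness of $B_j$ from the asymptotic-algebra decomposition of Lemma \ref{2.5} rather than from \cite{CKL}. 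For the converse, the paper routes through semiprimeness (Corollary \ref{4.7}, which rests on the Jacobson-semisimplicity machinery of Lemma \ref{4.5} and Theorem \ref{4.6}) together with finite-dimensionality, whereas you apply Lemma \ref{2.5} directly and check that each factor $M_{n_j}(B_j)$ is semisimple because a reduced finite-dimensional commutative $K$-algebra is a product of fields; your version is more elementary and makes the Wedderburn structure explicit, while the paper's reuses results it has already established. One small caveat, which you rightly flag and which affects the paper equally: the statement implicitly assumes each $V_j\neq 0$, since a layer with $V_j=0$ contributes nothing to $A$ yet its $B_j$ still appears in $Spec[A]$.
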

\begin{proof}
$``\Rightarrow"$ Let $A$ be semisimple. Then $A$ is both left artinian and semirpime, and thus $Spec[A]$ is a
$0$-dimensional scheme by Corollary \ref{4.9}.
As is well-known, the quotients of a semisimple ring are semisimple too. This implies the semisimplicity of $A/J_{j-1}$.
On the other hand, an ideal of a semisimple algebra is semisimple. This gives that $J_j'$ is semisimple since $J_j'$ is an ideal in $A/J_{j-1}$.
As a result, $B_j$ is a reduced ring with $\phi _j$ not a zero-divisor by \cite[Proposition 2.5]{CKL}.
Moreover, we also have from  Corollary \ref{4.9} that every $B_j $ is a finite dimensional $K$-algebra.
Recall that a finite dimensional unital algebra enjoy a special property \cite[Theorem 1.2.1]{DK}: every element is either invertible or a zero-divisor.
This forces $\phi_j$ to be invertible.

$``\Leftarrow"$ If $Spec[A]$ is a reduced 0-dimensional scheme and all $\phi_j$ are invertible,
then combining Corollary \ref{4.7} with Corollary \ref{4.9} implies that $A$ is a finite dimensional semiprime algebra, and consequently, $A$ is semisimple.
\end{proof}

Note that if $B_j$ is a finite dimensional affine $K$-algebra, then $\phi_j$ is invertible if and only if $det(\phi_j)$ is a unit in $B_j$.
Moreover, employing Lemma \ref{2.5} yields an easy result as follows.

\begin{corollary}\label{4.11}
Let $A$ be an affine cellular algebra $A$. Then $A$ is semisimple if and only if it is isomorphic to its asymptotic
algebra and $Spec[A]$ is a reduced 0-dimensional scheme.
\end{corollary}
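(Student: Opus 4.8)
The plan is to read this corollary off from Theorem~\ref{4.10} and Lemma~\ref{2.5}, handling the two implications separately. For the forward implication I would assume $A$ semisimple and invoke Theorem~\ref{4.10} to obtain that $Spec[A]$ is a reduced $0$-dimensional scheme and that each $\phi_j$ is invertible. Reducedness together with $0$-dimensionality already makes every $B_j$ a finite-dimensional affine $K$-algebra by Corollary~\ref{4.9}, so invertibility of $\phi_j$ means precisely that $\det(\phi_j)$ is a unit in $B_j$; the ``in particular'' clause of Lemma~\ref{2.5} then produces an isomorphism of affine cellular $K$-algebras $A\cong\bigoplus_{j=1}^m M_{n_j}(B_j)$, that is, $A$ is isomorphic to its asymptotic algebra. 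Both halves of the required conclusion hold.

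For the reverse implication I would deliberately avoid quoting Theorem~\ref{4.10}, since the hypothesis does not directly furnish invertibility of the $\phi_j$; instead I would show that the asymptotic algebra itself is semisimple and then transport semisimplicity across the given isomorphism $A\cong\bigoplus_{j=1}^m M_{n_j}(B_j)$. From $0$-dimensionality each $B_j$ is finite-dimensional over $K$ by Corollary~\ref{4.9}, and from reducedness each $B_j$ is a reduced ring. A finite-dimensional commutative reduced $K$-algebra is artinian with nilpotent Jacobson radical, so reducedness forces that radical to vanish and $B_j$ is semisimple, hence a finite product of finite field extensions of $K$. Consequently each $M_{n_j}(B_j)$ is semisimple, a finite direct product of semisimple algebras is semisimple, and therefore so is $A$.

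I expect no serious obstacle, as the mathematical weight is carried by the two cited results. The single point demanding attention is the logical structure of the reverse direction: because the hypothesis ``$A$ is isomorphic to its asymptotic algebra'' does not by itself yield invertibility of the forms $\phi_j$, Theorem~\ref{4.10} must not be used there on pain of circularity, and the clean substitute is the self-contained observation that a finite direct product of matrix rings over finite-dimensional reduced commutative $K$-algebras is semisimple.
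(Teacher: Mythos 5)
Your proposal is correct, and the forward implication is exactly the route the paper intends: Theorem \ref{4.10} gives the reduced $0$-dimensional scheme and invertible $\phi_j$, and the ``in particular'' clause of Lemma \ref{2.5} (together with the remark preceding the corollary identifying invertibility of $\phi_j$ with $\det(\phi_j)$ being a unit in the finite-dimensional algebra $B_j$) yields the isomorphism with the asymptotic algebra. The paper gives no explicit proof of the corollary, merely asserting that it follows by ``employing Lemma \ref{2.5}''; your treatment of the reverse implication is therefore a genuine, and welcome, addition. You are right that one cannot simply quote the ``$\Leftarrow$'' of Theorem \ref{4.10}, since the hypothesis that $A$ is isomorphic to its asymptotic algebra does not by itself hand you invertibility of the forms $\phi_j$ (Lemma \ref{2.5} is stated in one direction only). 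Your substitute --- each $B_j$ is finite-dimensional by Corollary \ref{4.9} and reduced, hence artinian with zero Jacobson radical, hence a finite product of finite field extensions of $K$, so that $\bigoplus_j M_{n_j}(B_j)$ and therefore $A$ is semisimple --- is correct and self-contained, and it makes explicit a step the paper leaves to the reader.
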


\smallskip

To conclude the investigation of semisimplicity of an affine cellular algebra,
we enhance the condition ``reduced" in Theorem \ref{4.10} to ``geometrically reduced",
which corresponds to a strengthened version of semisimple algebras: separable algebras.

Let us recall the definitions of a geometrically reduced ring and a separable algebra first.

\begin{definition}\label{4.12}
Let $K$ be a field and $\overline{K}$ the algebraic closure of $K$.
An affine $K$-algebra $A$ is called geometrically reduced if $A\otimes \overline{K}$ is a reduced ring.
A $K$-algebra $A$ is said to be separable if for arbitrary finite extension field $F$ over $K$,
$A\otimes F$ is a semisimple $F$-algebra.
\end{definition}

The definition of a separable algebra can be considered as a certain property which can be preserved under base change.
Especially, we will find that the base change of an affine cellular algebra is actually the base change of
affine algebras $B_j$. In order to study separable affine cellular algebras, we recall the definition of  \'{E}tale algebras.

\begin{definition}\label{4.13}\cite[Definition 1.5.3]{Sz}
A finite dimensional $K$-algebra is said to be \'{E}tale if it is isomorphic to a finite direct sum
of separable extensions of $K$.
\end{definition}

The following lemma can be viewed as an equivalent definition of an \'{E}tale algebra, which implies that
an \'{Etale} algebra is in fact a finite dimensional commutative separable algebra.

\begin{lemma}\cite[Proposition 1.5.6]{Sz}\label{4.14}
Let $A$ be an finite dimensional commutative $K$-algebra. Then the following statements are equivalent.
\begin{enumerate}
\item[(1)] $A$ is \'{E}tale.
\item[(2)] $A\otimes_K\overline{K}$ is reduced.
\end{enumerate}
\end{lemma}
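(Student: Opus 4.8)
The plan is to reduce the statement to a characterization of separability for a single finite field extension, and to carry the (non)reducedness across the base change by flatness. Since $A$ is a finite-dimensional commutative $K$-algebra, it is Artinian, hence isomorphic to a finite product $\prod_{i=1}^{n}A_i$ of local Artinian $K$-algebras. Because $\overline{K}$ is free, hence faithfully flat, over $K$, the functor $-\otimes_K\overline{K}$ is exact and the map $M\to M\otimes_K\overline{K}$, $m\mapsto m\otimes 1$, is injective for every $K$-vector space $M$; I will use this repeatedly to move nilpotent elements back and forth across $-\otimes_K\overline{K}$. Moreover $A\otimes_K\overline{K}\cong\prod_i(A_i\otimes_K\overline{K})$, so reducedness of the left-hand side is equivalent to reducedness of every factor on the right.

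For the implication $(1)\Rightarrow(2)$, I would write $A\cong\prod_i L_i$ with each $L_i$ a finite separable extension of $K$. By the primitive element theorem each $L_i\cong K[x]/(f_i)$ for a separable (squarefree) polynomial $f_i$, whence $L_i\otimes_K\overline{K}\cong\overline{K}[x]/(f_i)$. Since $f_i$ splits into distinct linear factors over $\overline{K}$, the Chinese Remainder Theorem gives $\overline{K}[x]/(f_i)\cong\overline{K}^{\deg f_i}$, a product of fields. Thus $A\otimes_K\overline{K}$ is a finite product of copies of $\overline{K}$, and in particular it is reduced.

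For $(2)\Rightarrow(1)$, injectivity of $A\to A\otimes_K\overline{K}$ first forces $A$ itself to be reduced, so each local factor $A_i$ is a reduced local Artinian ring; its maximal ideal is nilpotent, hence zero, so $A_i$ is a finite field extension $L_i$ of $K$. It then remains to prove each $L_i/K$ is separable, which I would do by contraposition: if some $L_i/K$ were inseparable, there is $\alpha\in L_i$ whose minimal polynomial $f$ over $K$ has a repeated root in $\overline{K}$, so $K(\alpha)\otimes_K\overline{K}\cong\overline{K}[x]/(f)$ contains a nonzero nilpotent by the Chinese Remainder Theorem. Since $K(\alpha)$ is a $K$-subspace direct summand of $L_i$, the induced ring map $K(\alpha)\otimes_K\overline{K}\hookrightarrow L_i\otimes_K\overline{K}$ is injective and carries this nilpotent to a nonzero nilpotent, contradicting reducedness. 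Hence every $L_i$ is separable and $A$ is \'{E}tale.

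The main obstacle is the inseparable direction of the single-extension case: producing an honest nilpotent and guaranteeing that it does not vanish after passing from $K(\alpha)$ to the larger algebra $L_i$. This is resolved by the remark that over a field any subspace splits off, so the inclusion $K(\alpha)\otimes_K\overline{K}\hookrightarrow L_i\otimes_K\overline{K}$ is injective; a ring homomorphism preserves nilpotency, so the nilpotent persists. The only other delicate point is the appeal to the primitive element theorem, which is legitimate because a finite separable extension is simple.
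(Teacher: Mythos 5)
The paper does not prove this lemma itself; it is quoted verbatim from Szamuely \cite[Proposition 1.5.6]{Sz}, so there is no in-paper argument to compare against. Your proof is correct and is essentially the standard one (and in substance the one in the cited source): decompose the Artinian algebra into local factors, use the primitive element theorem and the Chinese Remainder Theorem for the forward direction, and for the converse use faithful flatness of $\overline{K}$ over $K$ to pull reducedness back to $A$, identify the local factors as finite field extensions, and detect inseparability via a nonzero nilpotent in $K(\alpha)\otimes_K\overline{K}\cong\overline{K}[x]/(f)$ that survives the injection into $L_i\otimes_K\overline{K}$. All the delicate points you flag (injectivity after base change, persistence of the nilpotent under an injective ring map) are handled correctly, so the argument is complete.
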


We can give some necessary and sufficient conditions for an affine cellular algebra being separable.

\begin{corollary}\label{4.15}
Let $A$ be an affine cellular $K$-algebra. Then the following statements are equivalent.
\begin{enumerate}	
\item[(1)] A is a separable algebra.
\item[(2)] $Spec[A]$ is a geometrically reduced $0$-dimensional scheme and $det(\phi _j)$ invertible for all $j$.
\item[(3)] $\prod_{j=1}^{m}B_j$ is an \'{E}tale algebra algebra and for all $j$, $det(\phi _j)$ invertible.
\item[(4)] For all $j$, $B_j$ are \'{E}tale algebra algebra and $det(\phi _j)$ invertible.
\end{enumerate}
\end{corollary}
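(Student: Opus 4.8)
The plan is to establish the four-way equivalence by chaining together the results already proved, treating separability as a base-change-stable strengthening of semisimplicity. The conceptual engine is Theorem~\ref{4.10}: semisimplicity over $K$ is governed by $Spec[A]$ being reduced $0$-dimensional together with invertibility of the $\phi_j$. Separability asks for semisimplicity to persist after tensoring with every finite field extension $F/K$, so I expect the whole corollary to reduce to tracking how both conditions of Theorem~\ref{4.10} behave under base change.

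First I would verify that the equivalence $(3)\Leftrightarrow(4)$ is essentially formal: by Lemma~\ref{4.14}, $\prod_{j=1}^m B_j$ is \'{E}tale iff $(\prod_j B_j)\otimes_K\overline{K}$ is reduced; since tensoring commutes with finite products and a finite product of rings is reduced iff each factor is, this is equivalent to each $B_j\otimes_K\overline{K}$ being reduced, i.e.\ each $B_j$ being \'{E}tale. The invertibility condition on $\det(\phi_j)$ is identical in both statements, so $(3)\Leftrightarrow(4)$ follows. Next, for $(2)\Leftrightarrow(3)$, I would unwind the definitions: $Spec[A]$ is $0$-dimensional iff each $B_j$ is finite dimensional over $K$ (Corollary~\ref{4.9}), and ``geometrically reduced'' for $\prod_j B_j$ means exactly $(\prod_j B_j)\otimes_K\overline{K}$ reduced, which by Lemma~\ref{4.14} is the \'{E}tale condition once finite-dimensionality is in hand. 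Since a finite-dimensional $B_j$ is affine and $\det(\phi_j)$ a unit is, by the remark preceding this corollary, equivalent to $\phi_j$ being invertible, conditions (2) and (3) say the same thing.

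The substantive implication is $(1)\Leftrightarrow(2)$, and this is where I would spend the real effort. For $(2)\Rightarrow(1)$, I would take an arbitrary finite extension $F/K$ and show $A\otimes_K F$ is semisimple by applying Theorem~\ref{4.10} to it. The key observation is that base change respects the affine cellular structure: $A\otimes_K F$ is again affine cellular with layers $\mathcal{A}(V_j\otimes_K F,\, B_j\otimes_K F,\, \phi_j\otimes 1)$, so its associated scheme is $Spec(\prod_j (B_j\otimes_K F))$ and its bilinear forms are $\phi_j\otimes 1$. I must then check that geometric reducedness of $\prod_j B_j$ forces $\prod_j(B_j\otimes_K F)$ to be reduced for every finite $F$ — this follows because $B_j\otimes_K F$ embeds in $B_j\otimes_K\overline{K}$, which is reduced by hypothesis — and that $0$-dimensionality and invertibility of $\det(\phi_j)$ are preserved, the latter because a unit stays a unit under the ring map $B_j\to B_j\otimes_K F$. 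For the converse $(1)\Rightarrow(2)$, separability in particular gives semisimplicity of $A$ itself (take $F=K$), so Theorem~\ref{4.10} already yields $0$-dimensionality and invertibility of the $\phi_j$; the only new content is upgrading ``reduced'' to ``geometrically reduced.''

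I expect this last upgrade to be the main obstacle. The difficulty is that $\overline{K}$ is not a finite extension, so separability does not directly assert anything about $A\otimes_K\overline{K}$; I cannot simply plug $F=\overline{K}$ into the definition. The plan is to argue that geometric reducedness is detected at finite level: $\prod_j B_j$ is finite-dimensional over $K$ (from the $0$-dimensionality already secured), and a finite-dimensional $K$-algebra $R$ satisfies $R\otimes_K\overline{K}$ reduced iff $R\otimes_K F$ is reduced for some finite (indeed every sufficiently large finite normal) extension $F$, since any nilpotent in $R\otimes_K\overline{K}$ already lives in $R\otimes_K F$ for a suitable finite $F$. Applying separability with that $F$ makes $A\otimes_K F$ semisimple, hence (via Theorem~\ref{4.10} and Corollary~\ref{4.7}) its layers have reduced coordinate rings, forcing $\prod_j(B_j\otimes_K F)$ reduced and thus $\prod_j B_j$ geometrically reduced. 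Threading the finite-versus-algebraic-closure gap carefully is the crux; once it is bridged, the corollary closes.
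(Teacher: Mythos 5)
Your proposal is correct and follows the same overall architecture as the paper's proof: $(3)\Leftrightarrow(4)$ and $(2)\Leftrightarrow(3)$ via Lemma \ref{4.14}, $(2)\Rightarrow(1)$ by base-changing the cellular data to $A\otimes_K F$ and applying Theorem \ref{4.10}, and $(1)\Rightarrow(2)$ by first extracting semisimplicity of $A$ itself. The one place you genuinely diverge is the step you correctly identify as the crux: upgrading ``reduced'' to ``geometrically reduced'' in $(1)\Rightarrow(2)$. The paper disposes of this by asserting that $A\otimes_K\overline{K}$ is semisimple and declaring the reducedness of $\prod_j B_j\otimes_K\overline{K}$ ``clear'' --- but $\overline{K}$ is not a finite extension, so this does not follow directly from Definition \ref{4.12}, and the paper leaves the gap unaddressed. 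Your argument closes it properly: a nilpotent of $\prod_j B_j\otimes_K\overline{K}$ has coefficients in some finite subextension $F$, the inclusion $\prod_j B_j\otimes_K F\hookrightarrow \prod_j B_j\otimes_K\overline{K}$ (flatness of field extensions) shows it is already nilpotent there, and separability applied to that particular $F$ kills it via Theorem \ref{4.10}. So your route is slightly longer but strictly more rigorous on this point; the paper's shortcut would need exactly your finite-level reduction (or a citation that separable algebras stay semisimple under arbitrary base field extension) to be complete.
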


\begin{proof}
It follows from Lemma \ref{4.14} that (2) is equivalent to (3), and the equivalence between (3) and (4) is clear.
Then we only need to prove $(1)\Leftrightarrow (2)$.

$(1)\Rightarrow (2)$ Let $A$ be separable. Then by Definition \ref{4.12}, $A$ is semisimple.
As a result, $Spec [A]$ is a reduced 0-dimensional scheme  with $det(\phi _j)$ invertible by Theorem \ref{4.10}.
Then we only need to prove $Spec[A]$ is geometrically reduced, or $\prod_{j=1}^{m}B_j\otimes \overline{K}$ is reduced.
This is clear from the semisimplicity of $A\otimes \overline{K}$.

$(2)\Rightarrow (1)$ Assume that $Spec[A]$ is a geometrically reduced scheme with $det(\phi _j)$ invertible. We deduce by Lemma \ref{4.14} that
every $B_j$ is an \'{E}tale algebra and hence a separable algebra.
This implies that $B_j\otimes F$ is semisimple for arbitrary finite extension field $F$ of $K$,
and so $Spec[A\otimes F]$ is a reduced $0$-dimensional scheme.
In addition, it is clear that the swich matrices of $A\otimes F$ are the same as those of $A$ and thus all of their determinants are invertible.
Therefore, $A\otimes F$ is semisimple and this completes the proof.
\end{proof}

Note that when $K$ is a perfect field, a finite dimensional affine $K$-algebra is
an \'{E}tale algebra if and only if it is reduced (see \cite[Remark 1.5.8]{Sz}).
This leads to a direct result as follows.

\begin{theorem}\label{4.16}
Let $K$ be a perfect field and $A$ an affine cellular algebra. Then $A$ is semisimple if and only if $A$ is separable.
\end{theorem}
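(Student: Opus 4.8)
The plan is to derive Theorem \ref{4.16} as a direct consequence of the characterizations already in hand, using the perfect-field hypothesis only to collapse the distinction between ``reduced'' and ``geometrically reduced.'' One direction is essentially free: if $A$ is separable, then taking $F=K$ in Definition \ref{4.12} shows that $A\otimes_K K\cong A$ is semisimple, so separability always implies semisimplicity with no hypothesis on $K$.

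For the converse I would start from the assumption that $A$ is semisimple and apply Theorem \ref{4.10} to conclude that $Spec[A]$ is a reduced $0$-dimensional scheme and that each $\phi_j$ is invertible. By Corollary \ref{4.9}, the $0$-dimensionality of $Spec[A]$ forces every $B_j$ to be a finite dimensional $K$-algebra, and reducedness of $Spec[A]$ means each $B_j$ is reduced. At this point the perfect-field hypothesis enters: by the remark preceding the theorem (see \cite[Remark 1.5.8]{Sz}), a finite dimensional affine algebra over a perfect field is \'{E}tale precisely when it is reduced. Hence each $B_j$ is \'{E}tale, which by Lemma \ref{4.14} is equivalent to $B_j\otimes_K\overline{K}$ being reduced; equivalently, $Spec[A]$ is geometrically reduced and $0$-dimensional.

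It then remains to match this against the separability criterion. Since each $B_j$ is finite dimensional, invertibility of $\phi_j$ is equivalent to $\det(\phi_j)$ being a unit in $B_j$ (as noted after Theorem \ref{4.10} and in Lemma \ref{2.5}), so condition (2) of Theorem \ref{4.10} gives the determinant condition required in Corollary \ref{4.15}. Having verified that $Spec[A]$ is geometrically reduced and $0$-dimensional with all $\det(\phi_j)$ invertible, I would invoke the equivalence $(2)\Leftrightarrow(1)$ of Corollary \ref{4.15} to conclude that $A$ is separable, completing the proof.

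I do not expect a genuine obstacle here, since the result is corollary-level bookkeeping over the earlier theorems; the only point demanding care is the clean invocation of the perfect-field equivalence ``reduced $\Leftrightarrow$ \'{E}tale $\Leftrightarrow$ geometrically reduced'' for each $B_j$, and the routine translation between invertibility of $\phi_j$ and invertibility of $\det(\phi_j)$ in the finite dimensional setting. Everything else is a direct chain of citations to Theorem \ref{4.10}, Corollary \ref{4.9}, Lemma \ref{4.14}, and Corollary \ref{4.15}.
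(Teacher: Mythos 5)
Your proof is correct and follows exactly the route the paper intends: the paper states Theorem \ref{4.16} as a ``direct result'' of combining Theorem \ref{4.10} with Corollary \ref{4.15} via the perfect-field equivalence of ``reduced'' and ``\'{E}tale'' (hence ``geometrically reduced'') from \cite[Remark 1.5.8]{Sz}, which is precisely your argument. The easy direction (separable implies semisimple by taking $F=K$) and the bookkeeping on $\det(\phi_j)$ are handled appropriately.
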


\bigskip\bigskip

\noindent{\bf Acknowledgement}. 
The authors are grateful to Zeren Zheng for some helpful conversations.
Part of this work was done when Li visited Institute of Algebra and Number Theory 
at University of Stuttgart from August 2021 to September 2022. He takes this opportunity to express
his sincere thanks to the institute and Prof. S. Koenig for the hospitality during his visit.

\bigskip\bigskip\bigskip


\begin{thebibliography}{}
\bibitem{B} W. Brown, {\rm Generalized matrix algebras,} Canad. J. Math. {\bf 7} (1955), 188-190.

\bibitem{CKL} P. Carvalho, S. Koenig and C. Lomp, {\em Ring theoretical properties of affine cellular algebras,} J. Algebra {\bf 476} (2017), 494-518.

\bibitem{C1} W. D. Cui, {\em Affine cellularity of affine Brauer algebras}, arXiv:1406.3517.

\bibitem{C2} W. D. Cui, {\em Affine cellularity of affine Birman-Murakami-Wenzl algebras}, arXiv:1406.3516.

\bibitem{C3} W. D. Cui, {\em Affine cellularity of BLN algebras}, J. Algebra {\bf 441} (2015), 582-600.

\bibitem{C4} W. D. Cui, {\em Affine cellularity of affine q-Schur algebras}, Proc. Amer. Math. Soc. {\bf 144} (2016), 4663-4672.

\bibitem{C5} W. D. Cui, {\em Affine cellularity of affine Yokonuma-Hecke algebras}, J. Algebra {\bf 496} (2018), 292-314.

\bibitem{DK} Y. A. Drozd and V. V. Kirichenko, Finite Dimensional Algebras, Springer-Verlag, Berlin, 1994.

\bibitem{GL} J. J. Graham and G. I. Lehrer, {\em Cellular algebras}, Invent. Math. {\bf 123} (1996), 1-34.

\bibitem{GM} J. Guilhot and V. Miemietz, {\em Affine cellularity of affine Hecke algebras of rank two}, Math. Z. {\bf 271} (2012), 373-397.

\bibitem{H} T. W. Hungerford, Algebra (GTM 73). Springer 1974.

\bibitem{Ke} G. Kemper, A course in commutative algebra. Springer 2011

\bibitem{K} A. Kleshchev, {\em Affine highest weight categories and affine quasihereditary algebras}, Proc. London Math. Soc. {\bf 110} (2015), 841-882.

\bibitem{KL1} A. Kleshchev, J. Loubert, and V. Miemietz, {\em Affine cellularity of Khovanov-Lauda-Rouquier
algebras in type A}, J. Lond. Math. Soc. {\bf 88} (2013), 338-358.

\bibitem{KL2} A. Kleshchev and J. Loubert, {\em Affine cellularity of Khovanov-Lauda-Rouquier algebras of finite types}, Int. Math. Res. Not. {\bf 14} (2015), 5659-5709.

\bibitem{KX2} S. Koenig and C. C. Xi, {\em On the structure of cellular algebras}, In: I.Reiten, S.Smal${\o}$ and ${\O}$. Solberg (Eds.): Algebras
and Modules II. Canadian Mathematical Society Conference Proc. {\bf 24} (1998), 365-386.

\bibitem{KX3} S. Koenig and C. C. Xi, {\em Affine cellular algebras}, Adv. Math. {\bf 229} (2012), 139-182.

\bibitem{Sz} T. Szamuely, Galois groups and fundamental groups. Cambridge studies in advanced mathematics 117 (2008)

\bibitem {U} K. Ueno, Algebraic geometry 1. From algebraic varieties to schemes.
\end{thebibliography}
\end{document}